\newtheorem{theorem}{Theorem} 
\newtheorem{proposition}[theorem]{Proposition}
\newtheorem{lemma}[theorem]{Lemma}
\newtheorem{remark}[theorem]{Remark}
\newtheorem{example}[theorem]{Example}
\newcommand{\ba}{\begin{align}}
\newcommand{\ea}{\end{align}}  
\newcommand{\be}{\begin{equation}}
\newcommand{\ee}{\end{equation}}
\newcommand{\bea}{\begin{eqnarray}}
\newcommand{\eea}{\end{eqnarray}}
\newcommand{\barr}{\begin{array}}
\newcommand{\earr}{\end{array}}
\newcommand{\bn}{\begin{enumerate}}
\newcommand{\en}{\end{enumerate}}
\newcommand{\bi}{\begin{itemize}}
\newcommand{\ei}{\end{itemize}}
\newcommand{\bbbm}{\begin{pmatrix}}
\newcommand{\eeem}{\end{pmatrix}}
\newcommand{\cF}{{\cal F}}
\newcommand{\cG}{{\cal G}}
\newcommand{\cN}{{\cal N}}
\newcommand{\cP}{{\cal P}}
\newcommand{\E}{{\mathbb E}}
\newcommand{\one}{{\mathds{1}}}
\newcommand{\N}{{\mathbb N}}
\newcommand{\R}{{\mathbb R}}
\newcommand{\al}{\alpha}
\newcommand{\ga}{\gamma}
\newcommand{\de}{\delta}
\newcommand{\la}{\lambda}
\newcommand{\La}{\Lambda}
\newcommand{\ignore}[1]{}{}
\newcommand{\noin}{\noindent}
\newcommand{\nn}{\nonumber}
\newcommand{\q}{\quad}
\newcommand{{\QED}}{{\hfill QED} \bigskip}
\renewcommand{\subset}{\subseteq}
\newcommand{\cal}{\mathcal}
\newcommand{\es}{\emptyset}
\definecolor{darkspringgreen}{rgb}{0.09, 0.45, 0.27} 
\definecolor{darkgray}{rgb}{0.66, 0.66, 0.66}
\numberwithin{equation}{section}
\numberwithin{theorem}{section}
\tikzset{
    partial ellipse/.style args={#1:#2:#3}{
        insert path={+ (#1:#3) arc (#1:#2:#3)}
    }
}
\begin{document}
\title[Hodge allocation for generalized cooperative games]
{Hodge theoretic reward allocation for generalized cooperative games on graphs}
\thanks{\em \copyright 2021 by the author. }

\date{\today}

\author{Tongseok Lim}
\address{Tongseok Lim: Krannert School of Management \newline  Purdue University, West Lafayette, Indiana 47907, USA}
\email{lim336@purdue.edu}

\begin{abstract} 

This paper generalizes L.S. Shapley's celebrated value allocation theory on coalition games by discovering and applying a fundamental connection between stochastic path integration driven by canonical time-reversible Markov chains and Hodge-theoretic discrete Poisson's equations on general weighted graphs.

More precisely, we begin by defining cooperative games on general graphs and generalize Shapley's value allocation formula for those games in terms of  stochastic path integral driven by the associated canonical Markov chain. We then show the value allocation operator, one for each player defined by the path integral, turns out to be the solution to the Poisson's equation defined via the combinatorial Hodge decomposition on general weighted graphs. 

Several motivational examples and applications are presented, in particular, a section is devoted to reinterpret and extend Nash's and Kohlberg and Neyman's solution concept for cooperative games. This and other examples, e.g. on revenue management, suggest that our general framework does not have to be restricted to cooperative games setup, but may apply to broader range of problems arising in economics, finance and other social and physical sciences.

\end{abstract}

\maketitle

\noindent\emph{Keywords: cooperative game, Hodge decomposition, Poisson's equation, least squares, stochastic path integral representation, weighted graph, Markov chain, time-reversibility, Shapley value, Shapley formula, Nash solution, Kohlberg and Neyman's value
}

\noindent\emph{MSC2020 Classification: 91A12, 05C57, 	68R01} 

\tableofcontents

\section{Introduction}
Let $\N$ denote the set of positive integers. For $N \in \N$, we let $[N]:=\{1,2,...,N\}$ denote the set of players. Let $\Xi$ be an arbitrary finite set which represents all possible cooperation states. The typical example is the choice $\Xi := 2^{[N]}$ in the classical work of Shapley \cite{Shapley1953a, Shapley1953}, where each $S \subset [N]$ represents the players involved in the {\em coalition} $S$. 

In this paper, each $S \in \Xi$, for instance, might contain more (or less) information than merely the list of players involved in the cooperation $S$, and this motivates to consider an abstract state space $\Xi$. We assume the null cooperation, denoted by $\emptyset$, is in $\Xi$; see examples in section \eqref{examples}. Now the set of \emph{cooperative games} is defined by 
\be
\cG = \cG(\Xi) :=  \{ v : \Xi \to \R \ | \ v(\emptyset)=0 \}. \nn
\ee
Thus a cooperative game $v$ assigns a value $v(S)$ for each cooperation $S$, where the null coalition $\emptyset$ is assigned zero value. For instance, $S, T \in \Xi$ could both represent the cooperations among the same group of players but working under different conditions, possibly yielding $v(S) \neq v(T)$.

 When $\Xi = 2^{[N]}$, L.~Shapley considered the question of how to split the grand coalition value $v([N]) $ among the players for each game $v \in \cG(2^{[N]})$. It is uniquely determined according to the following theorem.

\begin{theorem}[\citet{Shapley1953}]
  \label{thm:shapley}
There exists a unique allocation
  $ v \in \cG(2^{[N]}) \mapsto \bigl( \phi _i (v) \bigr) _{ i \in [N] } $ satisfying the following conditions:
  
\noin{\rm (i)} $ \sum _{ i \in [N] } \phi _i (v) = v ([N]) $.

\noin{\rm (ii)} If
    $ v \bigl( S \cup \{ i \} \bigr) = v \bigl( S \cup \{ j \} \bigr)
    $ for all $ S \subset [N] \setminus \{ i, j \} $, then
    $ \phi _i (v) = \phi _j (v) $.

\noin{\rm(iii)} 
    If $ v \bigl( S \cup \{ i \} \bigr) - v (S) = 0 $ for all
    $ S \subset [N] \setminus \{ i \} $, then $ \phi _i (v) = 0 $.

\noin{\rm (iv)} 
    $ \phi _i ( \alpha v + \alpha ^\prime v ^\prime ) = \alpha \phi _i
    (v) + \alpha ^\prime \phi _i ( v ^\prime ) $ for all
    $ \alpha, \alpha ^\prime \in \mathbb{R} $, $v,v' \in \cG(2^{[N]})$.

  Moreover, this allocation is given by the following explicit formula:
  \begin{equation}
    \label{eqn:shapley}
    \phi _i (v) = \sum _{ S \subset [N] \setminus \{ i \} } \frac{ \lvert S \rvert ! \bigl( N  - 1 - \lvert S \rvert \bigr) ! }{ N  ! } \Bigl( v \bigl(  S \cup \{ i \} \bigr) - v (S) \Bigr) .
  \end{equation}
\end{theorem}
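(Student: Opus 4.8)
The plan is to prove the two assertions separately: first that the formula \eqref{eqn:shapley} does define an allocation with properties (i)--(iv), and then that (i)--(iv) determine the allocation uniquely, so that in particular it must coincide with \eqref{eqn:shapley}.

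\textbf{Existence.} I would verify the four properties directly for $\phi_i$ as defined in \eqref{eqn:shapley}. Property (iv) is immediate, since each $\phi_i(v)$ is a fixed linear combination of the numbers $\{v(S)\}_{S\subset[N]}$. Property (iii) is equally transparent: every summand carries the factor $v(S\cup\{i\})-v(S)$. For (i) and (ii) the cleanest route is to rewrite \eqref{eqn:shapley} by grouping the orderings of $[N]$ according to the set of predecessors of $i$: since exactly $|S|!\,(N-1-|S|)!$ orderings $\pi$ of $[N]$ place the players of $S$ before $i$ and the remaining $N-1-|S|$ players after $i$, one obtains
\[
\phi_i(v) \;=\; \frac{1}{N!}\sum_{\pi}\Bigl(v\bigl(P_i^\pi\cup\{i\}\bigr) - v\bigl(P_i^\pi\bigr)\Bigr),
\]
the sum running over all $N!$ orderings $\pi$ of $[N]$, with $P_i^\pi$ the set of players preceding $i$ in $\pi$. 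For each fixed $\pi$, the terms $v(P_i^\pi\cup\{i\})-v(P_i^\pi)$ summed over $i\in[N]$ telescope to $v([N])-v(\es)=v([N])$, which yields (i); and swapping the roles of $i$ and $j$ is implemented by the involution on orderings that interchanges the entries $i$ and $j$, under which, whenever the hypothesis of (ii) holds, the marginal contributions match term by term, yielding (ii). (Alternatively, (ii) follows from a direct pairing of the $S$-term and the $(S\cup\{j\})$-term in \eqref{eqn:shapley} against the corresponding terms for $j$.)

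\textbf{Uniqueness.} For each nonempty $T\subset[N]$ introduce the unanimity game $u_T\in\cG(2^{[N]})$ defined by $u_T(S)=1$ if $T\subset S$ and $u_T(S)=0$ otherwise. The key linear-algebra input is that $\{u_T:\es\neq T\subset[N]\}$ is a basis of $\cG(2^{[N]})$: there are $2^N-1$ of them, matching $\dim\cG(2^{[N]})$, and Möbius inversion on the Boolean lattice gives the explicit expansion $v=\sum_{\es\neq T\subset[N]}c_T\,u_T$ with $c_T=\sum_{S\subset T}(-1)^{|T|-|S|}v(S)$, which proves spanning and hence, by the dimension count, linear independence. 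Now let $\psi_i$ be \emph{any} allocation satisfying (i)--(iv), and fix a nonempty $T$. If $i\notin T$ then $u_T(S\cup\{i\})=u_T(S)$ for every $S\subset[N]\setminus\{i\}$ (both equal $1$ iff $T\subset S$), so (iii) forces $\psi_i(u_T)=0$. If $i,j\in T$ then for every $S\subset[N]\setminus\{i,j\}$ both $u_T(S\cup\{i\})$ and $u_T(S\cup\{j\})$ vanish (each of $S\cup\{i\}$, $S\cup\{j\}$ omits a member of $T$), so (ii) forces $\psi_i(u_T)=\psi_j(u_T)$; together with (i), which gives $\sum_{i\in T}\psi_i(u_T)=u_T([N])=1$, we conclude $\psi_i(u_T)=1/|T|$ for $i\in T$. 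Thus $\psi_i$ is pinned down on every $u_T$, and by linearity (iv) on all of $\cG(2^{[N]})$; since the $\phi_i$ of \eqref{eqn:shapley} is one such allocation, $\psi_i\equiv\phi_i$.

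\textbf{Main obstacle.} The only non-formal ingredient is the linear-algebra fact that the unanimity games span $\cG(2^{[N]})$ (equivalently, Möbius inversion over $2^{[N]}$); once that is available, the axiomatic computation of $\psi_i(u_T)$ is routine. A minor point requiring care is checking, at each invocation of (ii) and (iii), that the precise hypotheses of those axioms genuinely hold for the game $u_T$ — exactly the parenthetical verifications indicated above.
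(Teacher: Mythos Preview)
Your argument is the standard and correct proof of Shapley's theorem: the verification of (i)--(iv) for the explicit formula via the random-ordering rewriting, and uniqueness via the unanimity-game basis together with M\"obius inversion, are all carried out properly. One small point worth tightening in the symmetry verification is the case $j\in P_i^\pi$: there the swapped ordering $\pi'$ gives $P_j^{\pi'}=(P_i^\pi\setminus\{j\})\cup\{i\}$, and one needs the hypothesis applied to $S=P_i^\pi\setminus\{j\}$ to match the two marginals; you allude to this but do not spell it out.

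However, there is nothing to compare against: the paper does not prove this theorem. It is stated as background, attributed to \citet{Shapley1953}, and immediately followed by commentary on the axioms and the random-ordering reformulation \eqref{eqn:shapleyPermutation}, with no proof offered. So your proposal is not an alternative to the paper's argument but rather a (correct, classical) proof of a result the paper simply cites.
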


The four conditions listed above are often called the
\emph{Shapley axioms}. Quoted from \cite{StTe2019}, they say that [(i)\,efficiency] the value obtained by the grand coalition is fully distributed among the players, [(ii)\,symmetry] equivalent players receive equal amounts, [(iii)\,null-player] a player who contributes no marginal value to any coalition receives nothing, and [(iv)\,linearity] the allocation is linear in the game values.

 \eqref{eqn:shapley} can be rewritten also quoted from \cite{StTe2019}:  Suppose the players form the grand coalition by
joining, one-at-a-time, in the order defined by a permutation $\sigma$
of $[N]$. That is, player $i$ joins immediately after the coalition
$ S _{ \sigma, i } = \bigl\{ j \in [N] : \sigma (j) < \sigma (i) \bigr\}
$ has formed, contributing marginal value
$ v \bigl( S _{ \sigma, i } \cup \{ i \} \bigr) - v (S _{ \sigma, i }
) $. Then $ \phi _i (v) $ is the average marginal value contributed by
player $i$ over all $ N ! $ permutations $\sigma$, i.e.,
\begin{equation}
  \label{eqn:shapleyPermutation}
  \phi _i (v) = \frac{ 1 }{ N ! } \sum _\sigma \Bigl( v \bigl( S _{ \sigma, i } \cup \{ i \} \bigr) - v (S _{ \sigma, i } ) \Bigr) .
\end{equation} 
Here we notice an important principle, which we may call {\em Shapley's principle}, which says the value allocated to player $ i  $ is based entirely on the marginal values
$ v \bigl( S \cup \{ i \} \bigr) - v (S) $ the player $i$ contribute.

The pioneering study of Shapley \cite{Shapley1953a, Shapley1953, Shapley1953c, Shapley2010} have been followed by many researchers with extensive and diverse literature. For instance, \citet {Young1985} and \citet{Chun1989} studied Shapley's axioms and suggested its variants. \citet{Roth1977} studied the requirement of the utility function for games under which it is unique and equal to the Shapley value. \citet{Gul1989}  studied the relationship between the cooperative and noncooperative approaches by establishing a framework in which the results of the two theories can be compared. We refer to  \citet{Roth1988} and \citet{PeSu2007} for more detailed exposition of cooperative game theory.

More recently, the combinatorial Hodge decomposition has been applied to game theory and  various economic contexts, for instance \citet{CaMeOzPa2011}, \citet{JiLiYaYe2011}, \citet{StTe2019}.  We refer to \citet{Lim2020} for an accessible introduction to the Hodge theory on graphs.  Another important direction we note is the {\em mean field game theory}, the study of strategic decision making by interacting agents in very large populations; see \citet{CaDeLaLi2019}, \citet{AcBaCa2019}, \citet{BaCvZh2019, BaCoCeDe2021}, \citet{PoToZh2020}, \citet{LaSo2021}, for instance.

In particular,  \citet{StTe2019} showed that, given a game $v \in \cG(2^{[N]})$, there exist {\em component games} $v_i \in \cG(2^{[N]})$ for each player $i \in [N]$ which are naturally defined via the {\em combinatorial Hodge decomposition}, satisfying $v= \sum_{i \in [N]} v_i$. Moreover, they showed 
\be\label{shapleyrecover}
 v_i([N]) = \phi _i (v) \text{ \ for every \ } i \in [N]
 \ee
 hence they obtained a new characterization of the Shapley value as the value of the grand coalition in each player’s component game.

In this context, the {\em combinatorial Hodge decomposition} corresponds to the elementary {\em Fundamental Theorem of Linear Algebra}. For finite-dimensional inner product spaces $X, Y$ and a linear map $\mathrm{d} : X \to Y$ and its adjoint $\mathrm{d}^* : Y \to X$ given by $  \langle \mathrm{d}x , y \rangle_Y =   \langle x , \mathrm{d}^*y \rangle_X$, FTLA asserts that the  orthogonal decompositions hold:
\be\label{Hodge}
X = \mathcal{R} ( \mathrm{d} ^\ast ) \oplus \mathcal{N} ( \mathrm{d} ) , \qquad Y = \mathcal{R} ( \mathrm{d} ) \oplus \mathcal{N} ( \mathrm{d} ^\ast ),
\ee
where $\cal R(\cdot)$, $\cN(\cdot)$ stand for the range and nullspace respectively.

In order to introduce the work of \cite{StTe2019} and \cite{Lim2021}, let us briefly review their setup. Let $ G = ( V, E ) $ be an oriented graph, where $V$ is the set of
vertices and $E \subset V \times V $ is the set of edges. ``Oriented" means at most one of $ ( a, b ) $ and
$ ( b, a ) $ is in $E$ for $ a, b \in V $. Let $ \ell ^2 (V) $ be the space of functions
$ V \rightarrow \mathbb{R} $ with the (unweighted)  inner
product
\begin{equation}\label{noinner1}
  \langle u , v \rangle \coloneqq \sum _{ a \in V } u (a) v (a).
\end{equation}
Denote by $ \ell ^2 (E) $ the space of functions
$E \rightarrow \mathbb{R} $ with inner product
\begin{equation}\label{noinner2}
  \langle f, g \rangle \coloneqq \sum _{ (a,b) \in E } f (a,b) g (a,b)
\end{equation}
with the convention that, if $ f \in \ell ^2 (E)  $ and $ (a,b) \in E $, we define
$ f(b,a) \coloneqq - f (a,b) $ for the reverse-oriented edge. Thus every $ f \in \ell ^2 (E)  $ is defined on all the edges in $E$ and their reverse.

Next, define a linear operator
$ \mathrm{d} \colon \ell ^2 (V) \rightarrow \ell ^2 (E) $, the {\em gradient}, by
\begin{equation}\label{gradient}
  \mathrm{d} u ( a, b ) \coloneqq u (b) - u (a).
\end{equation}
Its adjoint $ \mathrm{d} ^\ast \colon \ell ^2 (E) \rightarrow \ell ^2 (V) $, the (negative) {\em divergence}, is then
\begin{equation}\label{divergence}
  (\mathrm{d} ^\ast f) (a) = \sum _{ b \sim a } f(b,a),
\end{equation} 
where $ b \sim a $ denotes $ (a,b) \in E $ or $ ( b, a ) \in E $, i.e., $a,b$ are adjacent.

 Now to study the cooperative games,  \citet{StTe2019} applied the above setup to the \emph{hypercube graph} $ G = ( V, E ) $, where
\begin{equation}\label{oldG}
  V = 2 ^{[N]}, \ \  E = \bigl\{ \bigl(  S, S \cup \{ i \} \bigr) \in V \times V \ | \  S \subset [N] \setminus \{ i \} ,\ i \in [N] \bigr\} .
\end{equation}
Note that each vertex $S \subset [N]$ may correspond to a vertex of the unit hypercube in $\R^{N}$, and each edge is oriented in the direction of the inclusion $ S \hookrightarrow S \cup \{ i \} $. Then for each $ i \in [N] $, \cite{StTe2019} set
  $ \mathrm{d} _i \colon \ell ^2 (V) \rightarrow \ell ^2 (E) $ as the following {\em partial differential operator}
  \begin{equation}\label{oldpartial}
    \mathrm{d} _i u \bigl( S , S \cup \{ j \} \bigr) =
    \begin{cases}
      \mathrm{d} u \bigl( S , S \cup \{ i \} \bigr) & \text{if } \ j=i, \\
      0 & \text{if } \  j \neq i.
    \end{cases}
  \end{equation} 
Thus $ \mathrm{d} _i v \in \ell ^2 (E) $ encodes the marginal
value contributed by player $i$ to the game $v$, which is a natural object to consider in view of the Shapley's principle. Indeed, for $v \in \cG(2^{[N]})$, \citet{StTe2019} defined the component game $v_i$ for each $i \in [N]$ as the unique solution in $\cG(2^{[N]})$ to the following {\em least squares}, or {\em Poisson's}, {\em equation}\footnote{The equation $\mathrm{d} u = f$  is solvable if only if $f \in \mathcal{R} (\mathrm{d} )$. When $f \notin \mathcal{R} (\mathrm{d} )$, a least squares solution to $\mathrm{d}u=f$ instead solves $\mathrm{d}u=f_1$ where $f=f_1+f_2$ with $f_1 \in \mathcal{R} (\mathrm{d} )$, $f_2 \in  \mathcal{N} (\mathrm{d}^\ast )$ given by FTLA. By applying $\mathrm{d}^\ast$, we get $\mathrm{d}^\ast \mathrm{d} u = \mathrm{d}^\ast f_1 = \mathrm{d}^\ast f$. Here the substitution $u \to v_i$ and $f \to \mathrm{d}_i v$ yields \eqref{ls}. Note the equation $\mathrm{d} ^\ast \mathrm{d} v = \mathrm{d} ^\ast f$ may be called a Poisson's equation since $\mathrm{d} ^\ast \mathrm{d}$ is the Laplacian and $\mathrm{d} ^\ast$ is the divergence.
}
\be\label{ls}
\mathrm{d} ^\ast \mathrm{d} v_i = \mathrm{d} ^\ast \mathrm{d}_i v
\ee
and showed that the component games satisfy some natural properties analogous to the Shapley axioms (see \cite[Theorem 3.4]{StTe2019}). Moreover, by applying the inverse of the {\em Laplacian} $\mathrm{d} ^\ast \mathrm{d}$ to \eqref{ls}, they provided an explicit formula for $v_i$ (see \cite[Theorem 3.11]{StTe2019}). In addition, \cite{StTe2019} discussed the case of weighted hypercube graph, viewing this as modeling variable willingness or unwillingness of players to join certain coalitions. More recently \citet{Lim2021}, inspired by \citet{StTe2019}, proposed a generalization of the Shapley axioms and showed that the extended axioms completely characterize the component games $(v_i)_{i \in [N]}$ defined by \eqref{ls} for the unweighted hypercube graph. 

Now the first goal of this paper is to generalize Shapley's coalition space $2^{[N]}$ into general cooperative state space $\Xi$. For this we consider directed graphs $G=(V,E)$ with $V= \Xi$, which can now be weighted. For each weighted graph $G$, we then associate a canonical Markov chain whose transition rates model the probability of which direction the cooperation would progress toward. Then powered by this Markov chain we introduce our main objective of study, the value function $V_i \in \cG(\Xi)$ for each player $i \in [N]$, described by a stochastic path integral such that $V_i(S)$ represents the expected total contribution the player $i$ provides toward each cooperation $S$. This may be viewed as a generalization of the Shapley formula for the cooperative games defined on the abstract cooperation network $G=(\Xi, E)$. Finally, our main result reveals  the stochastic integral $V_i$ is in fact the solution to Poisson's equation \eqref{ls2}, and therefore the value functions $(V_i)_{i \in [N]}$ coincide with the component games $(v_i)_{i \in [N]}$ which are defined via the equation \eqref{ls1}. As a result, this would justify the interpretation of the component game value $v_i(S)$ to be a reasonable reward allocation for player $i$ at the cooperation $S$. 

To the best of the author's knowledge, such a stochastic integral representation of an allocation scheme, and its connection to the Poisson's equation on general graphs, has not been discussed in the literature, and we hope our analysis and interpretation of this interesting connection  will eventually open up new directions in many scientific domains. 

This paper is organized as follows. In section \ref{connection} we introduce the reward allocation function for each player via a stochastic path integral driven by a Markov chain on a given graph, and verify its connection with the discrete Poisson's equation on the graph. In section \ref{Nash}, we present a dynamic interpretation and extension of Nash's and Kohlberg and Neyman's solution concept for strategic games to demonstrate the relevance of our probabilistic value allocation  with existing literature. In section \ref{examples}, we provide additional motivation by illustrating the generalized concepts  proposed in this paper through examples.

\section{Component game, path integral representation of  reward allocation, and their coincidence}\label{connection}

We begin by defining  the inner product space of functions $ \ell ^2 (\Xi) $, $ \ell ^2 (E) $, now possibly weighted. That is, let $\mu$, $\la$ be strictly positive weight functions on $\Xi$ and $E$ respectively, and set $\la (T,S) = \la (S,T)$ for any $(S,T) \in E$ by convention.  
Denote by $\ell^2_\mu(\Xi)$ the space of functions  $ V \rightarrow \mathbb{R} $ equipped with the ($\mu$-weighted) inner product
\begin{equation}\label{inner1}
   \langle u , v \rangle_{\mu} \coloneqq \sum _{ S \in \Xi } \mu (S) u (S) v (S).
\end{equation}
Denote by $ \ell ^2_\la (E) $ the space of functions
$E \rightarrow \mathbb{R} $ with inner product
\begin{equation}\label{inner2}
   \langle f , g \rangle_{\la} \coloneqq \sum _{ (S,T) \in E } \la (S,T) f (S,T) g (S,T)
\end{equation}
with the convention 
$ f(T,S) \coloneqq - f (S,T) $ for the reverse-oriented edge. We would say for $S,T \in \Xi$, there exists a (forward- or reverse-oriented) edge $(S,T)$ if and only if $\la(S,T) >0$. Then we say the weighted graph $(G, \la) = ((\Xi, E), \la)$ is {\em connected} if for any $S, T \in \Xi$ there exists a chain of (forward- or reverse-) edges $\big((S_k, S_{k+1}) \big)_{k=0}^{n-1}$ with $S_0 = S$, $S_n = T$. We assume $\emptyset \in \Xi$, so  every $S \in \Xi$ is connected with $\emptyset$, for convenience.

\subsection{Component games for cooperative game on general graph}
Recall the linear map, gradient, $\mathrm{d}: \ell^2_\mu(\Xi) \to \ell^2_\la(E)$ \eqref{gradient} between the inner product spaces.  We have an adjoint (divergence) $\mathrm{d}^\ast$, given by
\be\label{adjoint}
 \langle \mathrm{d} v, f \rangle_{\la} =  \langle v , \mathrm{d}^\ast f \rangle_{\mu}.
\ee
It is not hard to find the explicit form of $\mathrm{d}^*$. Let $(\one_S)_{S \in \Xi}$ be the standard basis of $\ell^2(\Xi)$, where $\one_S (T)  =1 $ if $T = S$ and otherwise $0$. Then
\begin{align}\label{div}
\mathrm{d}^*f (S) = \frac{ \langle \one_S , \mathrm{d}^\ast f \rangle_{\mu} }{\mu(S)}
= \frac{ \langle \mathrm{d} \one_S , f \rangle_{\la} }{\mu(S)}
=  \sum_{T \sim S} \frac{\la (T,S)}{\mu(S)}  f(T,S). 
\end{align}
Next we recall the partial differential operator $\mathrm{d}_i$ in \eqref{oldpartial}. While this is a natural definition for a measure of the contribution of player $i$ in the hypercube  graph setup \eqref{oldG}, it does not seem to readily apply for our general graph $G$. But the observation here is that $\mathrm{d}_i$ may not have to be a linear operator acting on the game space $\cG$. Instead, we can be utterly general and  define each player's contribution to be an arbitrary element in $\ell^2 (E)$. That is, let $\vec f =(f_1,...,f_N) \in \otimes_{i=1}^N \ell^2(E)$
 denote the $N$-tuple of player contribution measures, where $f_i(S,T)$ indicates player $i$'s contribution when the cooperation proceeds from $S$ to $T$.

Given $\vec f$,  we define the {\em component game} $v_i \in \cG(\Xi)$, for each player $i$, by the solution to the least squares / Poisson's equation (cf. \eqref{ls})
\be\label{ls1}
\mathrm{d} ^\ast \mathrm{d} v_i = \mathrm{d} ^\ast f_i.
\ee
Given an initial condition, \eqref{ls1} admits a unique solution so $v_i$ is well defined. This is because $G$ is connected and thus $ \cal N(\mathrm{d})$ is one-dimensional space spanned by the constant game $\one$, defined by $\one (S) := 1 $ for all $S \in \Xi$. Hence if $\mathrm{d} ^\ast \mathrm{d} v_i  = \mathrm{d} ^\ast \mathrm{d} w_i $, then $v_i - w_i \in \cal N(\mathrm{d})$ but due to the initial condition $v_i (\es) = w_i (\es) =0 $ from the assumption $v_i, w_i \in \cG(\Xi)$, we have $v_i \equiv w_i$. This is the reason we assume the connectedness of $G$.

But note that what \eqref{ls1} actually determines is the increment $d v_i$ in each connected component of $G$. Thus by assigning an initial value $v_i(S)$ for some $S$ in each connected component,  $v_i$ will be determined in that component via \eqref{ls1}. Here we shall assume, without loss of generality, $G$ is connected with initial condition $v_i(\es) =0$ for all $i$. But this is not strictly necessary, and e.g. one may assign any value for $v_i(\es)$ for each $i$, thereby modeling some sort of inequality at the initial stage.

Let us gather some results regarding the component games, whose proof is analogous to \citet{StTe2019} and \citet{Lim2021}.

\begin{proposition}
\label{facts} 
Given $(v, (f_i)_i, \mu, \la)$ consisting of the cooperative game, contribution measures and graph weights, the component games $(v_i)_{i \in [N]}$ defined by \eqref{ls1} satisfy the following:

{\rm {\boldmath$\cdot$} efficiency:} If $\sum_i f_i = \mathrm{d}v$, then $\sum _{ i \in [N] } v_i = v $.

{\rm {\boldmath$\cdot$} null-player:} 
    If $f_i \equiv 0$, then $v_i \equiv 0 $.

{\rm {\boldmath$\cdot$} linearity:} If we assume $f_i := \mathrm{d}_i v$ for a fixed linear map $\mathrm{d}_i : \ell^2_\mu(\Xi) \to \ell^2_\la(E)$, then 
$(\alpha v + \alpha ^\prime v ^\prime )_i = \alpha 
    v_i + \alpha ^\prime  v ^\prime_i $ for all
    $ \alpha, \alpha ^\prime \in \mathbb{R} $ and $v,v' \in \cG$.
 \end{proposition}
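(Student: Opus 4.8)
The plan is to verify each of the three bullet points directly from the defining equation \eqref{ls1}, using the one-dimensionality of $\mathcal{N}(\mathrm{d})$ (equivalently, the uniqueness of solutions subject to the initial condition $v_i(\es)=0$) as the main engine.

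\textbf{Efficiency.} Assume $\sum_i f_i = \mathrm{d}v$. Summing \eqref{ls1} over $i \in [N]$ and using linearity of $\mathrm{d}^\ast$ gives $\mathrm{d}^\ast \mathrm{d}\bigl(\sum_i v_i\bigr) = \mathrm{d}^\ast\bigl(\sum_i f_i\bigr) = \mathrm{d}^\ast \mathrm{d} v$. Hence $\sum_i v_i$ and $v$ are both solutions of the same Poisson equation $\mathrm{d}^\ast \mathrm{d}w = \mathrm{d}^\ast \mathrm{d}v$. Since $G$ is connected, $\mathcal{N}(\mathrm{d}) = \mathrm{span}\{\one\}$, so $\sum_i v_i - v$ is a constant game; and since each $v_i \in \cG(\Xi)$ vanishes at $\es$ and so does $v$, the constant is $0$, i.e. $\sum_i v_i = v$. (One should note that $v \in \cG(\Xi)$ is needed here, which is part of the standing setup.)

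\textbf{Null-player.} If $f_i \equiv 0$, then \eqref{ls1} reads $\mathrm{d}^\ast \mathrm{d}v_i = 0$, so $\mathrm{d}v_i \in \mathcal{R}(\mathrm{d}) \cap \mathcal{N}(\mathrm{d}^\ast)$; by the Hodge/FTLA decomposition \eqref{Hodge} this intersection is $\{0\}$, so $\mathrm{d}v_i = 0$, i.e. $v_i \in \mathcal{N}(\mathrm{d}) = \mathrm{span}\{\one\}$. The initial condition $v_i(\es)=0$ then forces $v_i \equiv 0$. (Equivalently one may argue $v_i$ and the zero game both solve the equation and invoke uniqueness.)

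\textbf{Linearity.} Suppose $f_i = \mathrm{d}_i v$ for a fixed linear operator $\mathrm{d}_i$. Fix $\alpha,\alpha' \in \R$ and $v,v' \in \cG$; write $w := \alpha v + \alpha' v'$. By definition $(w)_i$ solves $\mathrm{d}^\ast \mathrm{d}(w)_i = \mathrm{d}^\ast \mathrm{d}_i w = \mathrm{d}^\ast(\alpha \mathrm{d}_i v + \alpha' \mathrm{d}_i v') = \alpha\, \mathrm{d}^\ast \mathrm{d}_i v + \alpha'\, \mathrm{d}^\ast \mathrm{d}_i v'$, using linearity of $\mathrm{d}_i$ and of $\mathrm{d}^\ast$. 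On the other hand $\alpha v_i + \alpha' v_i'$ satisfies $\mathrm{d}^\ast \mathrm{d}(\alpha v_i + \alpha' v_i') = \alpha\, \mathrm{d}^\ast \mathrm{d}_i v + \alpha'\, \mathrm{d}^\ast \mathrm{d}_i v'$ as well, so both solve the same Poisson equation. Since $\cG$ is a vector space, $w \in \cG$ and $\alpha v_i + \alpha' v_i' \in \cG$, so both vanish at $\es$; uniqueness of solutions in $\cG(\Xi)$ then gives $(w)_i = \alpha v_i + \alpha' v_i'$.

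I do not expect a genuine obstacle here: the whole argument is ``apply $\mathrm{d}^\ast$ (or $\mathrm{d}^\ast\mathrm{d}$), compare two solutions of the same Poisson equation, invoke uniqueness.'' The only point requiring any care is bookkeeping the initial/boundary condition: well-definedness of $v_i$ in \eqref{ls1} already rests on connectedness and the normalization $v_i(\es)=0$, and each of the three statements silently uses that $v$, $w$, $\alpha v_i + \alpha' v_i'$, and the zero game all lie in $\cG(\Xi)$ so that the constant ambiguity in $\mathcal{N}(\mathrm{d})$ is pinned down. If one instead allowed arbitrary initial values $v_i(\es)$, efficiency and linearity would hold only up to the obvious additive constants, which is worth a remark but not needed for the stated proposition.
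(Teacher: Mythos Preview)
Your proof is correct and follows essentially the same approach as the paper: compute $\mathrm{d}^\ast\mathrm{d}$ of both candidate solutions, see they agree, and invoke the unique solvability of \eqref{ls1} under the initial condition $v_i(\es)=0$. The only cosmetic difference is that you spell out the uniqueness mechanism (via $\mathcal{N}(\mathrm{d})=\mathrm{span}\{\one\}$ and, for null-player, the Hodge orthogonality $\mathcal{R}(\mathrm{d})\cap\mathcal{N}(\mathrm{d}^\ast)=\{0\}$) whereas the paper simply cites ``unique solvability''.
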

\begin{proof} The null-player  property is immediate from the defining equation \eqref{ls1} and the initial condition $v_i(\es)=0$. For efficiency, we compute
  \begin{equation*}
     \mathrm{d}^\ast \mathrm{d} \sum _{ i \in [N] } v _i = \sum _{ i \in [N]}   \mathrm{d}^\ast\mathrm{d} v _i = \sum _{ i \in [N] }   \mathrm{d}^\ast f_i =   \mathrm{d}^\ast \sum _{ i \in [N] } f_i =   \mathrm{d}^\ast \mathrm{d} v
  \end{equation*}
thus efficiency follows by the unique solvability of \eqref{ls1}. Finally,  linearity follows by the assumed linearity of the map $\mathrm{d}_i$:
   \begin{align*}
     \mathrm{d}^\ast \mathrm{d} (\alpha v + \alpha ^\prime v ^\prime )_i &=      \mathrm{d}^\ast \mathrm{d}_i (\alpha v + \alpha ^\prime v ^\prime ) 
     = \al    \mathrm{d}^\ast \mathrm{d}_i v + \al'    \mathrm{d}^\ast \mathrm{d}_i v' \\
     & = \al   \mathrm{d}^\ast \mathrm{d} v_i + \al'   \mathrm{d}^\ast \mathrm{d} v_i' =\mathrm{d}^\ast \mathrm{d} (\al v_i +\al'v'_i)
       \end{align*}
       yielding $(\alpha v + \alpha ^\prime v ^\prime )_i = \alpha 
    v_i + \alpha ^\prime  v ^\prime_i $ as desired.
\end{proof}

Note that the $\mathrm{d}_i$ given in \eqref{oldpartial} is an example of a linear map. Also notice that we do not present a symmetry property analogous to the Shapley axiom \ref{thm:shapley}(ii), due to the fact that unlike the hypercube graph \eqref{oldG}, a general graph $G$ may not exhibit any obvious symmetry.

Next let us observe that, although the weight $\mu$ also affects the divergence $\mathrm{d}^*$ as in \eqref{div}, in fact  it does not affect the component games.
\begin{lemma}\label{noeffect}
Let $f \in \ell^2_\la(E)$. Then the solution $v \in \ell^2_\mu(\Xi)$ to the equation $\mathrm{d}^*\mathrm{d}v= \mathrm{d}^* f$ does not depend on the choice of $\mu$.
\end{lemma}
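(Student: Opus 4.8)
The plan is to unwind both sides of the equation $\mathrm{d}^*\mathrm{d}v = \mathrm{d}^* f$ using the explicit formula \eqref{div} for the divergence, and observe that the weight $\mu(S)$ appears as an overall factor $1/\mu(S)$ on \emph{both} sides of the equation at each vertex $S$, hence cancels. Concretely, at a fixed vertex $S \in \Xi$, the left-hand side is
\[
\mathrm{d}^*\mathrm{d}v(S) = \sum_{T \sim S} \frac{\la(T,S)}{\mu(S)}\, \mathrm{d}v(T,S) = \frac{1}{\mu(S)} \sum_{T \sim S} \la(T,S)\bigl(v(S) - v(T)\bigr),
\]
while the right-hand side is $\mathrm{d}^* f(S) = \frac{1}{\mu(S)} \sum_{T \sim S} \la(T,S) f(T,S)$. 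Multiplying through by $\mu(S) > 0$, the equation $\mathrm{d}^*\mathrm{d}v = \mathrm{d}^* f$ is equivalent, vertex by vertex, to the system
\[
\sum_{T \sim S} \la(T,S)\bigl(v(S) - v(T)\bigr) = \sum_{T \sim S} \la(T,S)\, f(T,S) \qquad \text{for all } S \in \Xi,
\]
in which $\mu$ does not appear at all. Since by the discussion following \eqref{ls1} this system together with the initial condition $v(\es) = 0$ (and connectedness of $G$) has a unique solution, that solution is manifestly independent of $\mu$.

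The key steps in order are thus: (1) write out $\mathrm{d}^*\mathrm{d}v(S)$ and $\mathrm{d}^* f(S)$ at an arbitrary vertex $S$ via \eqref{div}; (2) note both expressions carry the common prefactor $1/\mu(S)$ and clear it, using $\mu(S) > 0$; (3) observe the resulting reformulated equation involves only $\la$ and $f$, not $\mu$; (4) invoke the existence-uniqueness remark after \eqref{ls1} (connectedness gives $\cal N(\mathrm{d}) = \R\one$, and the initial condition $v(\es)=0$ pins down the constant) to conclude the unique solution is the same for every choice of $\mu$. One may phrase (4) either as uniqueness of the solution to the $\mu$-free linear system directly, or by comparing solutions $v_{\mu_1}, v_{\mu_2}$ for two weights: each satisfies the same $\mu$-free system and the same initial condition, so their difference lies in $\cal N(\mathrm{d})$ and vanishes at $\es$, hence they coincide.

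There is essentially no obstacle here; the lemma is a bookkeeping observation. The only point requiring a modicum of care is that $\mu$ \emph{does} change the inner product on $\ell^2_\mu(\Xi)$ and hence changes the operator $\mathrm{d}^*$ itself (as \eqref{div} shows), so a priori one might worry the solution depends on $\mu$ — the content of the lemma is precisely that this dependence is illusory because $\mu$ enters as a nonvanishing scalar multiple on each vertex-equation and thus does not alter the solution set. It is worth remarking, though it is not strictly needed for the proof, that $\la$ genuinely matters (it does not factor out), which is why the statement singles out $\mu$.
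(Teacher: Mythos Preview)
Your proof is correct and is essentially identical to the paper's own argument: both expand $(\mathrm{d}^*\mathrm{d}v - \mathrm{d}^*f)(S)$ via \eqref{div}, observe that $1/\mu(S)$ factors out of the equation at each vertex, and conclude that the solution set is determined by the $\mu$-free system involving only $\la$ and $f$. The paper's version is simply terser, writing the difference $\mathrm{d}^*\mathrm{d}v - \mathrm{d}^*f$ in one line rather than each side separately.
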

\begin{proof} 
$(\mathrm{d}^*\mathrm{d}v - \mathrm{d}^* f) (S) =  \frac{1}{\mu(S) }\sum_{T \sim S} \la (T,S) [ v(S) - v(T) -   f(T,S) ]$
 shows that $(\mathrm{d}^*\mathrm{d}v - \mathrm{d}^* f) (S) = 0$ if and only if $\sum_{T \sim S} \la (T,S) [ v(S) - v(T) -   f(T,S) ] =0$, showing there is no dependence on $\mu$.
\end{proof}
On the other hand, the solution to $\mathrm{d}^*\mathrm{d}v= \mathrm{d}^* f$ does depend on $\la$. \cite{StTe2019} demonstrates this with several explicit computations of component games for weighted and unweighted hypercube graph \eqref{oldG}.

\subsection{Value allocation operator via a stochastic path integral}
Now we define our main objective of study, the reward allocation function $V_i$ for each player $i$, via a stochastic path integral driven by a Markov chain which is naturally associated to the given weighted graph. 

Let $\N_0= \N \cup \{0\}$ and recall that $\la$ denotes the edge weight \eqref{inner2}. Given $\la$, let us consider the canonical Markov chain $(X^U_n)_{n \in \N_0}$ on the state space $\Xi$ with $X_0 = U$ (with the convention $X_n := X^\emptyset_n$), equipped with the transition probability $p_{S,T}$ from a state $S$ to $T$ as follows: 
 \begin{align}\label{MC}
p_{S,T} = \frac{\la (S,T)} { \sum_{U \sim S} \la (S,U)} \ \text{ if } \ T \sim S, \q  p_{S,T} = 0 \ \text{ if } \ T \not\sim S. 
\end{align} 
Notice the weight $\la$ determines which direction the cooperation is likely to progress. This allows us further flexibility for modeling stochastic cooperation network. Also, we remark that our framework can apply if one can summarize his/her cooperative project into a graph and boil down related strategic/stochastic ingredients into the probability of state progression, which is described by the graph weight $\la$ and  \eqref{MC}.

It turns out that the Markov chain \eqref{MC} is {\em time-reversible}, meaning that there exists the stationary distribution $\pi=(\pi_S)_{S \in \Xi}$ such that 
\be
\pi_S p_{S,T} = \pi_T p_{T,S} \ \text{ for all } \ S, T \in \Xi.
\ee
A consequence, which is important to us, is that every loop and its reverse have the same probability, that is (see, e.g.,  \citet{Ross2019})
\be\label{reversibility}
p_{S,S_1} p_{S_1,S_2} \dots  p_{S_{n-1},S_n} p_{S_n, S} =p_{S,S_n} p_{S_n,S_{n-1}} \dots  p_{S_2,S_1} p_{S_1, S}.
\ee
Let $(\Omega, \cF, \cP)$ be the underlying probability space for the Markov chain. For each $S,T \in \Xi$ and $\omega \in \Omega$, let $\tau_{S,T} = \tau_{S,T}(\omega) \in \N_0$ denote the first (random) time the Markov chain $\big(X^S_n(\omega)\big)_n$ visits $T$. Given a player's contribution measure $f \in \ell^2(E)$, we define the total contribution of the player along the sample path $\omega \in \Omega$ traveling from $S$ to $T$ by
\be\label{pathintegral}
{\cal I}_f^S(T) = {\cal I}_f^S(T)(\omega) := \sum_{n=1}^{\tau_{S,T}(\omega)} f  \big(X^S_{n-1}(\omega), X^S_n(\omega) \big).
\ee
Now we can define the value function for given $f \in \ell^2(E)$ via the following stochastic path integral driven by the Markov chain \eqref{MC} 
\be\label{value}
V_f^S (T) := \int_\Omega  {\cal I}_f^S(T)(\omega)  d \cP(\omega) = \E[ {\cal I}_f^S(T)]. 
\ee
Finally, let us denote $V_i^S := V_{f_i}^S$ for each player $i \in [N]$ given the players' contribution measures $(f_i)_{i \in [N]}$. One may notice that this path integral representation can be seen as a generalization of the Shapley formula \eqref{eqn:shapleyPermutation}. In particular, $V_i(T):= V_i^\emptyset(T)$ represents the expected total contribution the player $i$ provides toward each cooperation $T$, provided the game starts at the null cooperation state $\emptyset$.

\subsection{The coincidence between the value allocation operator and the component game}
The question is how we can compute the value allocators $(V_i)_{i \in [N]}$ which are described by the stochastic path integral. One could employ some computational methods to simulate the Markov chain and approximate the path integral, for instance.

Or, better yet, our main result of this paper shows that $V_i$ is a valid representation of the component game $v_i$, that is, $V_i = v_i$ for every player $i$. This result displays a remarkable connection between stochastic path integrals and combinatorial Hodge theory on general graphs.

First, we need to establish a transition formula for the value function. Note that in the proofs, we implicitly use the fact that the Markov chain is irreducible and hence visits every state infinitely many times. 
\begin{lemma}\label{transition}
Let $(G, \la)$ be any connected weighted graph. For any $S,T, U \in \Xi$ and $f \in \ell^2(E)$, we have $ V^U_f(T) - V^U_f(S) = V_f^S(T)$.
\end{lemma}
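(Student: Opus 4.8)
The plan is to identify, for each fixed target state $A \in \Xi$, the function $g_A \colon \Xi \to \R$ given by $g_A(Z) := V_f^Z(A)$ as the solution of a discrete Poisson‑type equation whose source term is \emph{independent of $A$}, and then to use the fact that two such solutions can differ only by a constant.

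First I would check $g_A$ is well defined: since $(G,\la)$ is connected, the chain \eqref{MC} is irreducible on the finite set $\Xi$, hence positive recurrent, so $\E[\tau_{Z,A}] < \infty$ for all $Z,A$; as $f$ is bounded, $|{\cal I}_f^Z(A)| \le \|f\|_\infty\,\tau_{Z,A}$ is integrable, so $g_A$ is finite, with $g_A(A)=0$ because $\tau_{A,A}=0$. Conditioning on the first step $X^Z_1$ and applying the strong Markov property at time $1$ then gives, for every $Z \neq A$,
\[
 g_A(Z) = \sum_{W \sim Z} p_{Z,W}\bigl(f(Z,W) + g_A(W)\bigr) = \widehat f(Z) + (P g_A)(Z), \qquad \widehat f(Z) := \sum_{W \sim Z} p_{Z,W}\, f(Z,W),
\]
$P=(p_{Z,W})$ being the transition matrix of \eqref{MC}. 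The crucial point is that the source $\widehat f$ depends only on $f$, not on the target $A$.

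The one remaining case $Z=A$ is where time-reversibility is used: there the identity would read $0 = \widehat f(A) + (Pg_A)(A) = \sum_{W\sim A} p_{A,W}\bigl(f(A,W)+V_f^W(A)\bigr)$, and the right-hand side is exactly the expected value of ${\cal I}_f$ accumulated along an excursion that leaves $A$, takes one step, and returns to $A$ at its first return time $\tau^+_A$. By \eqref{reversibility} the time-reversal of such a loop has the same probability under the chain, while the convention $f(W,Z)=-f(Z,W)$ reverses the sign of its accumulated contribution; pairing each loop with its reverse — a self-reverse loop contributes $0$, and the sum is absolutely convergent by positive recurrence — shows this expectation vanishes. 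Hence $(I-P)g_A = \widehat f$ on all of $\Xi$. For two targets $A,B$ this yields $(I-P)(g_B-g_A)=\widehat f-\widehat f=0$, so $g_B-g_A$ is harmonic for the chain; since $P$ is irreducible on a finite set, the maximum principle forces $g_B-g_A$ to be constant, and evaluation at $Z=A$ identifies that constant as $g_B(A)-g_A(A)=V_f^A(B)$ (recall $g_A(A)=0$). Therefore $V_f^Z(B)-V_f^Z(A)=V_f^A(B)$ for all $Z,A,B\in\Xi$, and the specialization $(Z,A,B)=(U,S,T)$ is exactly the assertion $V^U_f(T)-V^U_f(S)=V^S_f(T)$.

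The step I expect to be the real obstacle is the $Z=A$ base case: proving that the expected $f$-contribution of the first-return excursion at a vertex is zero is precisely where reversibility is needed — it fails for non-reversible irreducible chains, e.g.\ a deterministic directed cycle — and some care is needed with the absolute convergence of the sum over excursions before it is rearranged through the reversal involution. The remaining ingredients are routine: the first-step decomposition is elementary, and ``harmonic functions are constant'' on a finite irreducible chain is standard.
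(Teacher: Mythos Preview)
Your argument is correct, and it takes a genuinely different route from the paper's.

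The paper proceeds in two stages: it first establishes the antisymmetry $V_f^S(T)=-V_f^T(S)$ by a direct path-pairing argument --- decomposing a round trip $S\to T\to S$ into four segments $\omega_1\circ\omega_2\circ\omega_3\circ\omega_4$ and pairing $\omega$ with a partially reversed companion $\omega'$ so that the loop segments cancel --- and then combines this with the strong Markov property (splitting on the event $\{\tau_{U,S}<\tau_{U,T}\}$) to obtain the general transition formula. Your approach instead shows that $g_A(Z)=V_f^Z(A)$ satisfies the Poisson-type equation $(I-P)g_A=\widehat f$ with a source term independent of $A$, and deduces the lemma from the constancy of harmonic functions on a finite irreducible chain. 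Both arguments invoke reversibility at exactly the same place --- to show that the expected $f$-contribution of an excursion (loop) from a fixed vertex vanishes --- but you isolate this cleanly as the single nontrivial ingredient, whereas the paper's four-segment pairing is a more hands-on version of the same cancellation. A pleasant byproduct of your route is that the identity $(I-P)g_A=\widehat f$ is, after unwinding the normalisations (note $\widehat f(S)=-\mathrm{d}^*f(S)/\Lambda_S$ and $(I-P)g(S)=\mathrm{d}^*\mathrm{d}g(S)/\Lambda_S$), precisely the Poisson equation $\mathrm{d}^*\mathrm{d}V_f^A=\mathrm{d}^*f$ of Theorem~\ref{main1}; so your proof of the lemma already contains the main theorem, whereas the paper proves the lemma first and then uses it to derive the theorem. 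Your flagged caveat about absolute convergence before rearranging the excursion sum is well placed and is handled by positive recurrence, as you note.
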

 \begin{proof}
We first prove a special case $V_f^S(T) = - V_f^T(S)$. Consider a general sample path $\omega$ of the Markov chain \eqref{MC} starting at $S$, visiting $T$, then returning to $S$. We can split this journey into four stages:\\
$\omega_1$: the path returns to $S$ $m \in \N_0$ times while not visiting $T$ yet,\\
$\omega_2$: the path starts at $S$ and ends at $T$ while not returning to $S$,\\
$\omega_3$: the path returns to $T$ $n \in \N_0$ times while not visiting $S$ yet,\\
$\omega_4$: the path starts at $T$ and ends at $S$ while not returning to $T$.\\
Thus $\omega = \omega_1 \circ \omega_2  \circ \omega_3 \circ \omega_4$ is the concatenation of the $\omega_i$'s, and the probability of this finite sample path satisfies $\cP(\omega) = \cP(\omega_1)\cP(\omega_2)\cP(\omega_3)\cP(\omega_4)$.

Now consider a pairing $\omega'$ of $\omega$ as follows: let $\omega^{-1}_1$ be the reversed path of $\omega_1$, that is, if $\omega_1$ visits $T_0 \to T_1 \to \dots \to T_k$ (where $T_0 = T_k = S$ for $\omega_1$), then $\omega^{-1}_1$ visits $T_k \to \dots \to T_0$. Recall $\cP(\omega_1) = \cP(\omega^{-1}_1)$ due to the time-reversibility \eqref{reversibility}. Now define $\omega' := \omega^{-1}_1  \circ \omega_2  \circ \omega_3^{-1}  \circ \omega_4$. This is another general sample path starting at $S$, visiting $T$, then returning to $S$. We have $\cP(\omega) = \cP(\omega')$, and moreover,
\be
{\cal I}_f^S(T)(\omega) + {\cal I}_f^S(T)(\omega') = 2 \sum_{n=1}^{\tau_{S,T}(\omega_2)} f  \big(X^S_{n-1}(\omega_2), X^S_n(\omega_2) \big), \nn
\ee
since the loop $\omega_1$ and its reverse $\omega^{-1}_1$ aggregate $f$ with opposite sign, so they cancel out in the above sum. Now consider $\tilde \omega := \omega_3 \circ \omega_2^{-1} \circ \omega_1 \circ \omega_4^{-1}$ and $\tilde \omega' := \omega_3^{-1} \circ \omega_2^{-1} \circ \omega_1^{-1} \circ \omega_4^{-1}$. $(\tilde \omega, \tilde \omega')$ then represents a pair of general sample paths starting at $T$, visiting $S$, then returning to $T$. Moreover
\begin{align}
{\cal I}_f^T(S)(\tilde \omega) + {\cal I}_f^T(S)(\tilde \omega') &= 2 \sum_{n=1}^{\tau_{T,S}(\omega_2^{-1})} f  \big(X^T_{n-1}(\omega_2^{-1}), X^T_n(\omega_2^{-1}) \big) \nn \\
&= -2 \sum_{n=1}^{\tau_{S,T}(\omega_2)} f \big(X^S_{n-1}(\omega_2), X^S_n(\omega_2) \big) \nn \\
&= - ({\cal I}_f^S(T)(\omega) + {\cal I}_f^S(T)(\omega')) \nn
\end{align}
since $f \in \ell^2(E)$. Due to the generality of the pair $(\omega, \omega')$ and its counterpart $(\tilde \omega, \tilde \omega')$, and $\cP(\omega) = \cP(\omega') = \cP(\tilde \omega) = \cP(\tilde \omega')$ from the reversibility \eqref{reversibility}, the desired identity $V_f^S(T) = - V_f^T(S)$ follows by integration.

Now to show $ V_f^U(T) - V_f^U (S) = V^S_f(T)$, we proceed as in \cite{Lim2021}:
\begin{align*}
&{\cal I}_f^U(T) - {\cal I}_f^U (S) = \sum_{n=1}^{\tau_{U,T}} f \big( X^U_{n-1}, X^U_n \big) 
-  \sum_{n=1}^{\tau_{U,S}} f  \big( X^U_{n-1}, X^U_n \big) \\
&= {\bf 1}_{\tau_{U,S} < \tau_{U,T}}\sum_{n=\tau_{U,S} + 1}^{\tau_{U,T}} f  \big( X^U_{n-1}, X^U_n \big) 
- {\bf 1}_{\tau_{U,T} < \tau_{U,S}}\sum_{n=\tau_{U,T} + 1}^{\tau_{U,S}} f \big( X^U_{n-1}, X^U_n \big).
\end{align*}
By taking expectation, we obtain via the Markov property
\begin{align*}
&\E[{\cal I}_f^U(T)]  - \E[{\cal I}_f^U (S)] \\
 &= \cP(\{\tau_{U,S} < \tau_{U,T} \}) V^S_f(T) 
- \cP(\{\tau_{U,T} < \tau_{U,S}\}) V^T_f(S) \\
&= V^S_f(T)
\end{align*}
which proves the transition formula  $ V_f^U(T) - V_f^U (S) = V^S_f(T)$.
\end{proof}
Now we present our main result. 
\begin{theorem}\label{main1}
Let $f \in \ell^2(E)$ and let the Markov chain \eqref{MC} be defined on a weighted graph $(G, \la)$. Then $V^S_f$ solves the Poisson's equation
\be\label{ls2}
\mathrm{d}^*\mathrm{d}V^S_f= \mathrm{d}^* f 
\ee
on the connected component of $G$ to which the state $S$ belongs.
\end{theorem}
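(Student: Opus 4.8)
The plan is to verify the Poisson equation pointwise, using the transition formula from Lemma~\ref{transition} together with a one-step conditioning of the path integral on the first move of the Markov chain. Fix a state $S$ in some connected component of $G$. For an arbitrary vertex $R$ in that component, I want to show $(\mathrm{d}^*\mathrm{d}V^S_f)(R) = (\mathrm{d}^*f)(R)$, which by \eqref{div} is equivalent to
\be\label{eqn:goal}
\sum_{T \sim R} \la(T,R)\,\big[ V^S_f(R) - V^S_f(T) - f(T,R) \big] = 0. \nn
\ee
By Lemma~\ref{transition}, $V^S_f(R) - V^S_f(T) = -\,V^S_f(T)+V^S_f(R) = -V^R_f(T)$ (taking $U=S$, swapping the roles so that $V^S_f(T)-V^S_f(R)=V^R_f(T)$, hence $V^S_f(R)-V^S_f(T) = -V^R_f(T)$), so the target reduces to the statement that depends only on $R$:
\be\label{eqn:goal2}
\sum_{T \sim R} \la(T,R)\,\big[ V^R_f(T) + f(T,R) \big] = 0. \nn
\ee
Dividing by the normalizing constant $c_R := \sum_{U \sim R}\la(R,U)$ and using \eqref{MC}, this is $\sum_{T\sim R} p_{R,T}\big[V^R_f(T) + f(T,R)\big] = 0$, i.e.
\be\label{eqn:goal3}
\sum_{T\sim R} p_{R,T}\,V^R_f(T) \;=\; \sum_{T\sim R} p_{R,T}\,f(R,T), \nn
\ee
using the antisymmetry $f(T,R) = -f(R,T)$.

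Now I would establish this last identity by a first-step decomposition of the path integral ${\cal I}_f^R(R)$ — the total $f$-contribution accumulated over an excursion from $R$ back to $R$. Such an excursion makes a first step to some neighbor $T$ (probability $p_{R,T}$), contributing $f(R,T)$, and then travels from $T$ back to $R$, contributing ${\cal I}_f^T(R)$; by the Markov property and strong Markov at $\tau_{T,R}$, $\E[{\cal I}_f^T(R)] = V^T_f(R) = -V^R_f(T)$ (using the special case $V_f^S(T) = -V_f^T(S)$ proved inside Lemma~\ref{transition}). Hence
\be\label{eqn:excursion}
\E\big[{\cal I}_f^R(R)\big] \;=\; \sum_{T\sim R} p_{R,T}\Big( f(R,T) + \E[{\cal I}_f^T(R)] \Big) \;=\; \sum_{T\sim R} p_{R,T}\, f(R,T) \;-\; \sum_{T\sim R} p_{R,T}\, V^R_f(T). \nn
\ee
So \eqref{eqn:goal3} — and therefore the theorem — follows once I show $\E[{\cal I}_f^R(R)] = 0$, i.e.\ that the expected $f$-integral over a closed excursion from $R$ to $R$ vanishes. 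This is exactly where time-reversibility \eqref{reversibility} enters: pair each excursion loop $\omega: R = R_0 \to R_1 \to \cdots \to R_k = R$ with its reversal $\omega^{-1}: R_k \to \cdots \to R_0$; reversibility gives $\cP(\omega) = \cP(\omega^{-1})$, while the antisymmetry of $f$ gives ${\cal I}_f^R(R)(\omega) = -{\cal I}_f^R(R)(\omega^{-1})$, so the contributions cancel in pairs and the integral is zero. (One must note the involution $\omega \mapsto \omega^{-1}$ is a measure-preserving bijection on the set of excursions from $R$ to $R$, and that the Markov chain being irreducible, these excursions are a.s.\ finite, so the sum/integral is well-defined; these are the same facts already invoked in Lemma~\ref{transition}.)

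The main obstacle, and the step deserving the most care, is the justification that $\E[{\cal I}_f^R(R)] = 0$ rigorously — i.e.\ setting up the pairing on the space of finite excursion paths, checking it is probability-preserving as a consequence of \eqref{reversibility}, and confirming integrability so that rearranging the expectation into cancelling pairs is legitimate. Everything else is bookkeeping: the reduction to \eqref{eqn:goal3} is pure algebra via Lemma~\ref{transition} and \eqref{div}, and the first-step decomposition \eqref{eqn:excursion} is a routine application of the (strong) Markov property, which the remark preceding the theorem already licenses by noting the chain visits every state infinitely often. I would also remark that the restriction to the connected component of $S$ is essential and automatic here, since the Markov chain started at $S$ never leaves that component, so $V^S_f$ is only defined — and \eqref{ls2} only meaningful — there.
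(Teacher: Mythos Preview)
Your proposal is correct and follows essentially the same route as the paper's proof: reduce the pointwise Poisson identity via Lemma~\ref{transition} to the statement $\sum_{T\sim R} p_{R,T}\,V^R_f(T) = \sum_{T\sim R} p_{R,T}\,f(R,T)$, then obtain this from a first-step decomposition of an excursion $R\to R$ together with the reversibility-based cancellation $\E[{\cal I}_f^R(R)]=0$. The only cosmetic difference is that you keep the evaluation vertex $R$ separate from the chain's starting state $S$, while the paper (implicitly using Lemma~\ref{transition} to note $\mathrm{d}V_f^S$ is independent of $S$) collapses the two and phrases the first-step decomposition as ``aggregation over loops omitting the first move''; the content is identical.
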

The theorem tells us when one wants to calculate the value allocation function $V_i$ for the player $i$ given her contribution measure $f_i$, one can instead compute the least squares solution $v_i$, which can be easily done via least squares solvers for instance. Conversely, the least squares solution $v_i$ may be approximated by simulating the canonical Markov chain \eqref{MC} on the graph $(G, \la)$ and calculating the contribution aggregator \eqref{value}. Both directions look interesting and potentially useful. 
\begin{proof}[Proof of Theorem \ref{main1}.] 
Recall the weight $\mu$ on $\ell^2(\Xi)$ is not relevant in either Lemma \ref{noeffect} or \eqref{MC}, so we will simply set $\mu \equiv 1$. 
Given $f \in \ell^2(E)$, our aim is to show that $V_f$ solves \eqref{ls2}.  Let $S \in \Xi$, and let $\{T_1,...,T_n\}$ be the set of all vertices adjacent to $S$ (i.e., either $(S, T_k)$ or $(T_k,S)$ is in $E$), and set $\La_S = \sum_{k=1}^n \la (S, T_k)$.
  Then by \eqref{div}, \eqref{MC}, we have
   \begin{align}
 \label{divf}
\mathrm{d}^*f (S) / \La_S &= \sum_{k=1}^n p_{S, T_k}  f(T_k,S), \text{ and} \\
\label{divV}
\mathrm{d}^* \mathrm{d} V_f (S) / \La_S &=  \sum_{k=1}^n p_{S, T_k}\big(V_f(S) - V_f(T_k)\big) =  \sum_{k=1}^n p_{S, T_k} V_f^{T_k} ( S) 
\end{align}
where the last equality is from Lemma \ref{transition}. Now observe that we can interpret \eqref{divV} as the aggregation \eqref{value} of path integrals of $f$ \eqref{pathintegral} for all loops starting and ending at $S$, but in this aggregation of $f$ we do not take into account the first move from $S$ to $T_k$, since this first move is made by the transition rate $p_{S, T_k}$ and not driven by $V^{T_k}_f$. On the other hand, if we aggregate path integrals of $f$ for all loops emanating from $S$, we get zero due to the reversibility \eqref{reversibility}. Hence we conclude:
\begin{align*}
&0 = \text{aggregation of path integrals of $f$ for all loops emanating from $S$} \\
&= \text{aggregation of path integrals of $f$ for all loops, omitting the first moves} \\
&\, + \text{aggregation of path integrals of $f$ for all first moves from $S$}\\
&= \sum_{k=1}^n p_{S, T_k} V_f^{T_k} ( S) +  \sum_{k=1}^n p_{S, T_k} f(S,T_k) \\
&= \mathrm{d}^* \mathrm{d} V_f (S) / \La_S - \mathrm{d}^*f (S) / \La_S,
\end{align*}
yielding $ \mathrm{d}^* \mathrm{d} V_f (S) = \mathrm{d}^*f (S)$ for all $S \in \Xi$, concluding the proof.
\end{proof}

\section{Dynamic interpretation and extension of Nash's and Kohlberg and Neyman's value allocation scheme
}\label{Nash}
Quoted from \citet{KoNe2021}, a {\em strategic game} is a model for a multiperson competitive interaction. Each player chooses a strategy, and the combined choices of all the players determine a payoff to each of them. A problem of interest in game theory is the following: How to evaluate, in advance of playing a game, the economic worth of a player’s position? A ``value" is a general solution, that is, a method for evaluating the worth of any player in a given strategic game.

In this section we briefly introduce Nash's and Kohlberg and Neyman's value, and explain how their axiomatic notion of value can be reinterpreted in terms of our dynamic value allocation operator, and as a consequence, can be extended to partial (i.e., non-grand) coalitions.

A strategic game in \cite{KoNe2021} is defined by a triple $G=([N],A,g)$, where

{\boldmath$\cdot$} $[N]=\{1,2,...,N\}$ is a finite set of players,

{\boldmath$\cdot$} $A^i$ is the finite set of player $i$’s pure strategies, and $ A = \prod_{i=1}^n A^i$,

{\boldmath$\cdot$} $g^i : A \to \R$ is player $i$’s payoff function, and $g = (g^i)_{i \in \N}$.

The same notation, $g$, is used to denote the linear extension

{\boldmath$\cdot$} $g^i : \Delta(A) \to \R$,

where for any set $K$, $\Delta(K)$ denotes the probability distributions on $K$. For each (partial) coalition $S \subset [N]$, we also denote

{\boldmath$\cdot$} $A^S = \prod_{i \in S} A^i$, and

{\boldmath$\cdot$} $X^S = \Delta(A^S)$ (correlated strategies of the players in $S$).

Denote by $\mathbb G([N])$ the set of all $N$-player strategic games, and consider $\ga : \mathbb G([N]) \to \R^N$ which may be viewed as a map that associates with any strategic game an allocation of payoffs to the players. Now Kohlberg and Neyman suggested a list of axioms for $\ga$, where the core notion is the following  definition of the {\em threat power} of coalition $S$:
\be\label{threat}
(\de G)(S) := \max_{x \in X^S} \min_{y \in X^{[N] \setminus S}} \bigg( \sum_{i \in S} g^i (x,y) - \sum_{i \notin S} g^i (x,y) \bigg).
\ee
Intuitively, the threat power of $S$ may read as the maximal difference of the sum of the players' payoffs in $S$ against the other party $[N]/S$, regardless of what collective strategies the other party implements.

Then Kohlberg and Neyman showed the axioms of {\em Efficiency} (the maximal sum of all players' payoffs, $\de G([N])$, is fully distributed among the players), {\em Balanced threats} (see below), {\em Symmetry} (equivalent players receive equal
amounts), {\em Null player} (a player having no strategic impact on players' payoffs has zero value), and {\em Additivity} (the allocation is additive on strategic games) uniquely determine an allocation $\ga$; see \cite{KoNe2021} for details. Moreover, such allocation $\ga$ is a  generalization  of the Nash solution for two-person games \cite{Nash1953} into $N$-person games.

Among the axioms, the axiom of balanced threats reads:
\vspace{2mm}

{\boldmath$\cdot$} If $(\delta G)(S) = 0$ for all $S \subset [N]$, then $\ga_i = 0$ for all $i \in [N]$.
\vspace{2mm}

In words, if no coalition has threat power over the other party, then the allocation is zero for all players. From now on let $\ga=(\ga_1,...,\ga_N)$ denote the unique allocation map determined by the above five axioms. Kohlberg and Neyman also provided an explicit formula for $\ga$ as
\be\label{formula}
\ga_i G = \frac{1}{N!} \sum_{\cal R} (\de G) \big(\bar S^{\cal R}_i \big),
\ee
where the summation is over the $N!$ possible orderings of the set $[N]$, $S^{\cal R}_i$ denotes the subset consisting of those $j \in [N]$ that precede $i$ in the ordering $\cal R$, and $\bar S^{\cal R}_i := S^{\cal R}_i \cup \{ i \}$.

Now let us slightly rewrite \eqref{formula} as follows. By minimax principle, it is easily seen that $\de G( S) = - \de G([N] \setminus S)$. This {\em antisymmetry} gives
\begin{align*}
\ga_i G &=  \frac{1}{N!} \sum_{\cal R} \frac{(\de G) \big(\bar S^{\cal R}_i \big) - (\de G) \big([N] \setminus \bar S^{\cal R}_i \big)}{2} \\
&=  \frac{1}{2N!} \sum_{\cal R} (\de G) \big(\bar S^{\cal R}_i \big) - \frac{1}{2N!} \sum_{\cal R} (\de G) \big([N] \setminus \bar S^{\cal R}_i \big) \\
&=  \frac{1}{2N!} \sum_{\cal R} (\de G) \big(\bar S^{\cal R}_i \big) - \frac{1}{2N!} \sum_{\cal R} (\de G) \big( S^{\cal R}_i \big) \\
&=  \frac{1}{N!} \sum_{\cal R}\frac{ (\de G) \big(\bar S^{\cal R}_i \big) -  (\de G) \big( S^{\cal R}_i \big)}{2}.
\end{align*}
Motivated by this, let us define the coalition game $v = v_G: 2^{[N]} \to \R$
\be\label{value1}
v(S) := \frac{ \de G(S) +\de G([N])}{2} = \frac{ \de G([N]) - \de G([N] \setminus S)}{2}.
\ee
Note that $v(\emptyset) = 0$, $v([N]) = \de G([N])$.  
We may interpret the value function $v(S)$ as the maximal grand coalition value $\de G ([N])$ subtracted by the threat power of the other party $[N] \setminus S$, with the factor of 1/2.

By the fact the value function $v$ is a translation of $\de G / 2$, we see
\be 
  \mathrm{d}_i v (S^{\cal R}_i) = v (\bar S^{\cal R}_i) - v (S^{\cal R}_i) = \frac{ (\de G) \big(\bar S^{\cal R}_i \big) -  (\de G) \big( S^{\cal R}_i \big)}{2} \nn
  \ee
(recall \eqref{gradient}---\eqref{ls}), yielding an alternate expression of allocation
\[
\ga_i G = \frac{1}{N!} \sum_{\cal R}  \mathrm{d}_i v (S^{\cal R}_i).
\]
Notice this is the Shapley value \eqref{eqn:shapleyPermutation} for the coalition game $v$. 
We recall \citet{StTe2019} defined the component game $v_i$ for each $i \in [N]$ as the unique solution in $\cG(2^{[N]})$ to the Poisson's equation $
\mathrm{d} ^\ast \mathrm{d} v_i = \mathrm{d} ^\ast \mathrm{d}_i v$, 
and showed that the component game value at the grand coalition coincides with the Shapley value, that is, $v_i([N]) = \ga_i G$ in this case. Now Theorem \ref{main1} allows us to conclude the following.
\begin{theorem}[Dynamic extension of Nash's and Kohlberg and Neyman's value]\label{sub1}
For a given strategic game $G \in \mathbb G ([N])$, let $v \in \cG(2^{[N]})$ be the coalition game defined as in \eqref{value1}. Let the hypercube graph \eqref{oldG} be equipped with constant weight $\la \equiv 1$, and let $(X_n)_{n \in \N_0}$ be the canonical Markov chain \eqref{MC} with $X_0 = \emptyset$. Then for each player $i \in [N]$ and every coalition $S \subset [N]$, the value allocation operator 
\be
V_i (S) :=  \int_\Omega   \sum_{n=1}^{\tau_{\emptyset, S}(\omega)} \mathrm{d}_i v  \big(X_{n-1}(\omega), X_n(\omega) \big)  d \cP(\omega) \nn
\ee
extends Nash's and Kohlberg and Neyman's value in the sense that
\be
V_i ([N]) = \ga_i G. \nn
\ee
\end{theorem}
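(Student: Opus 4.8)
The plan is to recognize that the stochastic operator $V_i$ displayed in the statement is, in the notation of Section~\ref{connection}, nothing but $V^\emptyset_{f_i}$ with contribution measure $f_i := \mathrm{d}_i v \in \ell^2(E)$, driven by the canonical Markov chain \eqref{MC} on the hypercube graph \eqref{oldG} with constant weight $\la \equiv 1$; then to identify it with the component game $v_i$ of \citet{StTe2019} via Theorem~\ref{main1}; and finally to quote two facts already assembled in the text, namely $v_i([N]) = \phi_i(v)$ (equation \eqref{shapleyrecover}) and the rewriting of the Kohlberg--Neyman formula that gives $\phi_i(v) = \ga_i G$ (the chain of equalities preceding the theorem, using \eqref{eqn:shapleyPermutation} and the minimax antisymmetry $\de G(S) = -\de G([N]\setminus S)$).

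Concretely, I would first note that since the hypercube graph \eqref{oldG} is connected, the chain \eqref{MC} with $\la \equiv 1$ is irreducible on the finite state space $2^{[N]}$, hence recurrent; thus $\tau_{\emptyset,S} < \infty$ almost surely and $V_i(S)$ is well defined, and $\tau_{\emptyset,\emptyset} = 0$ gives $V_i(\emptyset) = 0$, so $V_i \in \cG(2^{[N]})$. Applying Theorem~\ref{main1} with $f = f_i$ (the weight $\mu$ being immaterial by Lemma~\ref{noeffect}) yields $\mathrm{d}^*\mathrm{d}V_i = \mathrm{d}^* f_i = \mathrm{d}^*\mathrm{d}_i v$ on the whole graph. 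Hence $V_i$ solves the Poisson's equation \eqref{ls1} with initial value $V_i(\emptyset) = 0$, and by the uniqueness recorded after \eqref{ls1} it coincides with the component game $v_i$.

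It then only remains to chain the equalities: $V_i([N]) = v_i([N]) = \phi_i(v) = \ga_i G$, the middle step being \eqref{shapleyrecover} and the last the computation carried out just before the theorem. The same identification $V_i(S) = v_i(S)$ persists for every partial coalition $S \subsetneq [N]$, which is what justifies the phrase ``dynamic extension'' of the Nash and Kohlberg--Neyman value to non-grand coalitions.

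I do not expect a genuine obstacle here; the work is a bookkeeping check that Theorem~\ref{main1} applies verbatim. The points deserving a line of care are: that $f_i = \mathrm{d}_i v$ is indeed a well-defined member of $\ell^2(E)$, so the hypotheses of Theorem~\ref{main1} are met; that the connected component of $\emptyset$ is all of $2^{[N]}$, so the Poisson identity holds at every $S$ including $S = [N]$; and that the initial conditions agree ($V_i(\emptyset) = 0 = v_i(\emptyset)$), so that one genuinely gets $V_i \equiv v_i$ rather than only $\mathrm{d}V_i \equiv \mathrm{d}v_i$.
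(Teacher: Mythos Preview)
Your proposal is correct and follows exactly the same route as the paper's proof: apply Theorem~\ref{main1} to identify $V_i$ with the component game $v_i$ via unique solvability of the Poisson equation with matching initial condition $V_i(\emptyset)=v_i(\emptyset)=0$, then invoke \eqref{shapleyrecover} and the Shapley-formula rewriting of $\ga_i G$ established just before the statement. The paper's own proof is a two-sentence compression of precisely this; your additional remarks on connectedness, irreducibility, and well-definedness are sound but not strictly needed.
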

\begin{proof}
\citet{StTe2019} gives $v_i([N]) = \ga_i G$. Theorem \ref{main1} gives $v_i = V_i$ on $2^{[N]}$ for all $i \in[N]$, as the Poisson's equation yields a unique solution given the same initial condition $v_i(\emptyset) = V_i(\emptyset) = 0$.
\end{proof}
We refer to \citet{KoNe2021} for a nice review of the historical development of the ideas around the notion of value, as well as several applications to various economic models (also see \cite{KoNe2021-1}).
\begin{remark} \citet{KoNe2021} also introduces the notion of {\em Bayesian games}, which is a game of incomplete information in the sense that the players do not know the true payoff functions, but only receives a signal which is correlated with  the payoff functions; see \cite{KoNe2021} for detailed setup. However, the power of threat, $\de_B G(S)$, of a coalition $S$ in Bayesian game $G$ is still antisymmetric ( $\de_B G(S)= -\de_B G([N] \setminus S)$), and the value allocation also satisfies the representation formula \eqref{formula}. Thus we can conclude  the value of the Bayesian games still admits the stochastic path-integral extension for subcoalitions as in Theorem \ref{sub1}.
\end{remark}

\section{Further examples, beyond coalition games}\label{examples}
In this section we shall present more examples, where in particular, the last example describes a problem in financial decision making and goes beyond the coalitional game setup. Let us begin by revisiting the famous {\em glove game} and the classical Shapley value,  quoted from \cite{StTe2019}.
\begin{example}[Glove game]
  \label{ex:introGlove}
Let $ N = 3 $, and suppose that player
  $1$ has a left-hand glove, while players $2$ and $3$ each have a
  right-hand glove. The players wish to put together a pair of gloves,
  which can be sold for value $1$, while unpaired gloves have no
  value. That is, $ v (S) = 1 $ if $S \subset N $ contains both a left
  and a right glove (i.e., player $1$ and at least one of players $2$
  or $3$) and $ v (S) = 0 $ otherwise. The Shapley values  are
  \begin{equation*}
    \phi _1 (v) = \frac{ 2 }{ 3 } , \qquad \phi _2 (v) = \phi _3 (v) = \frac{ 1 }{ 6 } .
  \end{equation*}
  This is easily seen from \eqref{eqn:shapleyPermutation}: player $1$ contributes
  marginal value $0$ when joining the coalition first (2 of 6
  permutations) and marginal value $1$ otherwise (4 of 6 permutations)
  , so $ \phi _1 (v) = \frac{ 2 }{ 3 } $. Efficiency and
  symmetry then yield
  $ \phi _2 (v) = \phi _3 (v) = \frac{ 1 }{ 6 } $.
\end{example}
We present some new examples henceforth.
\begin{example}[Glove game on an extended graph]
  \label{glovecomplete}
In Shapley's classical coalition game setup, at each stage only one player can join the current coalition, and moreover no one can leave, as can be seen in the Shapley formula \eqref{eqn:shapleyPermutation} and the corresponding hypercube graph \eqref{oldG}.
 
Now our general setup can free up these constraints. For example again let $ N = 3 $, consider the same value function $v$ for the glove game, but now let the game graph $G=(V,E)$ be e.g. such that $V= 2^{[3]}$, and $(S,T) \in E$ iff $S \subsetneq T$. 
Thus in this setup multiple players can join or leave the coalition simultaneously, e.g., from $\{1\}$ to $\{1,2,3\}$ and conversely. But then what is the ``$\mathrm{d}_i v$", the individual contribution for such a state transition?  \cite{StTe2019} sets this as in \eqref{oldpartial}, which looks natural for the hypercube graph. But our framework allows for a complete freedom in the choice of $f_i = \mathrm{d}_i v$. Here, for instance, for $S \subsetneq T$, we may set  
 \begin{equation}\label{newpartial}
    \mathrm{d} _i v \bigl( S , T \bigr) :=
    \begin{cases}
     \frac{1}{|T|-|S|}\big( v(T) - v(S) \big) & \text{if } \ i \in T \setminus S, \\
      0 & \text{if } \ i \in S
    \end{cases}
  \end{equation} 
with the usual convention $  \mathrm{d} _i v \bigl( T, S \bigr) = -   \mathrm{d} _i v \bigl( S , T \bigr)$.   Thus, in each transition, the surplus $\mathrm{d}v(S,T)=v(T) - v(S)$ is equally distributed to the newly incorporated players under this choice of $\mathrm{d} _i v$.  
\end{example}

\begin{example}[Research paper writing game] 
  \label{research}
We want to free up still another restriction in the classical cooperative game setup, namely, the state space for the game needs to be the {\em coalition space} $2^{[N]}$. Instead, in our setup, we can consider a general {\em cooperation state space} $\Xi$, which does not have to be related with the set of players $[N]$. 

To give an example, let $\Xi$ describe the research progress state space on which the game (reward) $v : \Xi \to \R$ is assigned, with the initial state $\emptyset \in \Xi$ and the research completion state $F \in \Xi$. Let $(G, \la)$ be a given game graph with vertices in $\Xi$. Now we define the players contribution measure $f_i =  \mathrm{d}_i v$, for each edge $(S,T) \in E$, by
\be\label{researchpayoff}
\mathrm{d}_i v (S,T)= \frac{1}{N}  \big( v(T) - v(S) \big).
\ee
Thus, unconditionally, the surplus $\mathrm{d}v(S,T)$ is equally distributed to all players involved in this game. Since the path integral of a gradient ($\mathrm{d}v$) depends only on the initial and terminal states, this clearly implies
\be
V_{\mathrm{d}v} (S) = v(S), \text{ and hence } \  V_i (S) = \frac{v(S)}{N}  \ \text{ for all } \  i \in [N] \text{ and } S \in \Xi. \nn
\ee
In particular $ V_i (F) = v(F) / N$, but not only that, for any research progression path $\omega \in \Omega$ towards $F$, \eqref{researchpayoff} clearly yields
\be
\sum_{n=1}^{\tau_{\emptyset, F}(\omega)} \mathrm{d}_i v  \big(X_{n-1}(\omega), X_n(\omega) \big) =  \frac{v(F)}{N} \nn
\ee
implying that the reward is deterministic and not stochastic.
\end{example}
Lastly, we present an application in financial decision problem.
\begin{example}[Entrepreneur's revenue problem] 
  \label{revenue}
In this example let $\Xi$ be the project state space, in which the manager wants to reach the project completion state $F \in \Xi$. The game value $ v(U) $ is the manager's revenue if the project ends up in the state $U$. However at each transition from $S$ to $T$, the manager has to pay $f_i (S,T)$ to the employee $i$, since it is her contribution and share. Thus, in this single transition, the manager’s surplus is $v(T) - v(S) - \sum_i f_i (S,T)$, which can be positive or negative. Thus we do not impose the efficiency condition $\mathrm{d}v = \sum_i f_i$ here, thereby freeing up still another restriction. Moreover, notice that $f_i$ needs not take the form $\mathrm{d}_i v$, i.e., $f_i$ needs not depend on the game $v$.

Now the manager’s revenue problem is, when they start at the initial project state (say $\emptyset$) and if the manager’s goal is reaching the project completion state $F$, what is the expected revenue for the manager?

Observe the answer is $v(F) - \sum_i V_i(F)$, where $V_i$ is defined by the stochastic integral given contribution measures $(f_i)_i$ as in \eqref{value}. (So if this is negative, the manager may not want to start the project at all.)

Moreover the manager may want to recalculate her expected gain or loss in the middle of the project progress. That is, say the current project status is $T$, and they have come to $T$ from $\emptyset$ through a certain path $\omega$, and thus the manager has paid the payoffs -- the path integrals -- \eqref{pathintegral} to the employees. Now the manager may want to calculate the expected gain if she decides to further go on from $T$ to $F$. This is now given by
\[
v(F) - v(T) - \sum_i V^T_i (F),
\]
and the manager can make decisions based on these expected revenue information. And for this, Theorem \ref{main1} shows the stochastic integral $V^T_i$ can be evaluated by solving the equation \eqref{ls2}, and vice versa.
\end{example}

\section{Conclusion} In this paper we reviewed the cooperative game framework of Shapley \cite{Shapley1953a, Shapley1953} and its Hodge-theoretic extension by \citet{StTe2019} and \citet{Lim2021}. These papers regard the cooperative games as value functions on $2^{[N]}$, and \cite{StTe2019, Lim2021} apply the differential operators $\mathrm{d}, \mathrm{d}_i$ defined on the hypercube graph \eqref{oldG}. Then we proposed that the cooperative games may be defined in a  much more general framework of arbitrary weighted game graphs $G=(\Xi, E)$, in which the partial differential $\mathrm{d}_i$ can be replaced by a general contribution measure $f_i \in \ell^2 (E)$. Given $f_i$ for each player $i$, we proposed a natural value allocation operator $V_i$ given by a stochastic path integral driven by the canonical, reversible Markov chain on each weighted graph. Then in Theorem \ref{main1}, we verified an intriguing connection of this stochastic integral with the component game $v_i$, which is the  solution to the Poisson's equation \eqref{ls1}, inspired by the Hodge decomposition \eqref{Hodge}. Now if the efficiency condition  $\sum_i f_i = \mathrm{d}v$ holds for a given cooperative game $v$, then in view of Proposition \ref{facts}, $V_i=v_i$ may be interpreted as a fair and efficient allocation of the cooperation value $v(S)$ to the player $i$ at the cooperation state $S$, which may read as a generalization of the Shapley's allocation formula \eqref{eqn:shapleyPermutation}. However, as illustrated in Example \ref{revenue}, freeing up the efficiency condition allows us to cover even broader range of problems in economics, finance and other social and physical sciences. Finally, in Section \ref{Nash} we explained how our allocation operator $V_i$ can provide a dynamic interpretation and extension of Nash’s and Kohlberg and Neyman’s solution concept for cooperative strategic games.


{\small

}

\end{document}